\date{}
\renewcommand{\uppercasenonmath}[1]{}
\numberwithin{equation}{section} \theoremstyle{plain}
\newtheorem*{theorem*}{Main Theorem}
\newtheorem{theorem}{Theorem}[section]
\newtheorem{corollary}[theorem]{Corollary}
\newtheorem*{corollary*}{Corollary}
\newtheorem{lemma}[theorem]{Lemma}
\newtheorem*{lemma*}{Lemma}
\newtheorem{proposition}[theorem]{Proposition}
\newtheorem*{proposition*}{Proposition}
\newtheorem*{remark*}{Remark}
\newtheorem*{example*}{Example}
\newtheorem*{definition*}{Definition}
\newtheorem*{conjecture*}{Conjecture}
\newtheorem*{ack*}{ACKNOWLEDGEMENTS}
\newcommand{\pf}{\noindent\begin {proof}}
\newcommand{\epf}{\end{proof}}
\begin{document}
\begin{center}
{\Large \bf Quillen equivalence of singular model categories
 \footnotetext{
E-mail address: wren@cqnu.edu.cn.}}

\vspace{0.5cm}    Wei Ren\\
{School of Mathematical Sciences, Chongqing Normal University,\\ Chongqing 401331, China
}
\end{center}


\bigskip
\centerline { \bf  Abstract}
\leftskip10truemm \rightskip10truemm
Let $R$ be a left-Gorenstein ring. We show that there is a Quillen equivalence between singular contraderived model category and singular coderived model category. Consequently, an equivalence between the homotopy category of exact complexes of projective modules and the homotopy category of exact complexes of injective modules is given.
\bigskip

{\noindent \it Key Words:}  Model category, Quillen equivalence, left-Gorenstein ring.\\
{\it 2010 MSC:}  18E30, 18A40, 55P10, 18G35.\\

\leftskip0truemm \rightskip0truemm \vbox to 0.2cm{}

\section { \bf Introduction}
The notion of recollement of triangulated categories was introduced with an ideal that one category can be viewed as being ``glue together'' from two others. Denote by $\mathbf{K}(\mathcal{I})$ (resp. $\mathbf{K}_{ex}(\mathcal{I})$) the homotopy category of complexes (resp. exact complexes) of injective modules, and $\mathbf{D}(R)$ the derived category. In \cite{Kra05}, a recollement
$\xymatrix{
\mathbf{K}_{ex}(\mathcal{I})\ar[r]^{} & \mathbf{K}(\mathcal{I}) \ar[r]^{}\ar@<-0.6ex>[l]^{} \ar@<0.6ex>[l]_{} &\mathbf{D}(R)\ar@<-0.6ex>[l]^{} \ar@<0.6ex>[l]_{}}$ was established. A dual one  $\xymatrix{
\mathbf{K}_{ex}(\mathcal{P})\ar[r]^{} & \mathbf{K}(\mathcal{P}) \ar[r]^{}\ar@<-0.6ex>[l]^{} \ar@<0.6ex>[l]_{} &\mathbf{D}(R)\ar@<-0.6ex>[l]^{} \ar@<0.6ex>[l]_{}}$
for homotopy categories with respect to projective modules was obtained in \cite{Bec14}.

Equivalences between the homotopy categories $\mathbf{K}(\mathcal{I})$ and $\mathbf{K}(\mathcal{P})$, which appear in the middle of the above mentioned recollements, were proved under different conditions. For example, let $R$ be a commutative noetherian ring with a dualizing complex $D$, then  there is a triangle-equivalence $D\otimes_{R}-: \mathbf{K}(\mathcal{P})\rightarrow \mathbf{K}(\mathcal{I})$; see \cite[Theorem I]{IK06}. If $R$ is a left-Gorenstein ring \cite{Bel00} (i.e. a ring such that any left $R$-module has finite projective dimension if and only if it has finite injective dimension), an equivalence $\mathbf{K}(\mathcal{P})\simeq\mathbf{K}(\mathcal{I})$ was established in \cite{Chen10} via relative derived categories with respect to the balanced pairs.

It is natural to ask that for the homotopy categories of exact complexes appeared on the left of the above recollements, when does the equivalence $\mathbf{K}_{ex}(\mathcal{P})\simeq\mathbf{K}_{ex}(\mathcal{I})$ hold? It is worth noting that one cannot restrict the equivalent functors in \cite{Chen10, IK06} to get an answer.

Recall that a model structure on an abelian category is three distinguished classes of maps, called weak equivalences, cofibrations and fibrations respectively, satisfying a few axioms. For a model category, the associated homotopy category is constructed by localization with respect to weak equivalences. The category $\mathbf{K}_{ex}(\mathcal{P})$ (resp. $\mathbf{K}_{ex}(\mathcal{I})$) was realized  as the homotopy category of singular contraderived (resp. singular coderived) model category, see \cite{Bec14, Gil08}.

In this paper, we are inspired to show a Quillen equivalence between singular contraderived model category and singular coderived model category, and then we give an equivalence $\mathbf{K}_{ex}(\mathcal{P})\simeq\mathbf{K}_{ex}(\mathcal{I})$. To state the main result of the paper, we need some notations. Let $\mathrm{Ch}(R)$ be the category of $R$-complexes. Following \cite{Gil08}, let $ex\widetilde{\mathcal{P}}$ (resp. $ex\widetilde{\mathcal{I}}$) be the subcategory of all exact complexes of projective (resp. injective) modules, and  $(ex\widetilde{\mathcal{P}})^{\perp}$ (resp. ${^{\perp}}(ex\widetilde{\mathcal{I}})$) be the right (resp. left) orthogonal. In the language of Hovey's correspondence \cite[Theorem 2.2]{Hov02}, the singular contraderived model structure on $\mathrm{Ch}(R)$ is denoted by a triple $\mathcal{M}_{sing}^{ctr} = (ex\widetilde{\mathcal{P}}, (ex\widetilde{\mathcal{P}})^{\perp}, \mathrm{Ch}(R))$, and the singular coderived model structure is denoted by $\mathcal{M}_{sing}^{co} = (\mathrm{Ch}(R), ^{\perp}(ex\widetilde{\mathcal{I}}), ex\widetilde{\mathcal{I}})$.

Let $X$ be any complex. We denote by $\Omega: \mathrm{Ch}(R)\rightarrow \mathrm{Mod}(R)$ the functor given by $\Omega(X)= X_0/\mathrm{Im}d_{1}^{X}$, and $\Theta: \mathrm{Ch}(R)\rightarrow \mathrm{Mod}(R)$ the functor given by $\Theta(X)= \mathrm{Ker}d_{0}^{X}$. We define $\Lambda: \mathrm{Mod}(R)\rightarrow \mathrm{Ch}(R)$ to be a functor which sends every module to a stalk complex concentrated on degree zero.

\begin{theorem}\label{main}
 Let $R$ be a left-Gorenstein ring, $F=\Lambda\Omega$ and $G=\Lambda\Theta$ be functors on $\mathrm{Ch}(R)$. Then $(F, G): (\mathrm{Ch}(R), \mathcal{M}_{sing}^{ctr})\longrightarrow (\mathrm{Ch}(R), \mathcal{M}_{sing}^{co})$ is a Quillen equivalence between the singular contraderived model category and singular coderived model category.
\end{theorem}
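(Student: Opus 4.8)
The plan is to verify that $(F,G)$ is a Quillen adjunction and then that it is a Quillen equivalence, using Hovey's correspondence to translate everything into statements about the cotorsion pairs defining the two model structures. First I would establish the adjunction $(F,G)$: note that $\Lambda$ is right adjoint to $\Theta' := (-)_0$ restricted appropriately, but more to the point one checks directly that $\mathrm{Hom}_{\mathrm{Ch}(R)}(\Lambda\Omega X, Y) \cong \mathrm{Hom}_R(\Omega X, \Theta Y) \cong \mathrm{Hom}_{\mathrm{Ch}(R)}(X, \Lambda\Theta Y)$, the middle isomorphism coming from the fact that a map out of the cokernel $X_0/\mathrm{Im}\,d_1^X$ into a module is the same as a chain map $X \to \Lambda M$ (it must kill $\mathrm{Im}\,d_1^X$ and land in $\mathrm{Ker}\,d_0^{\Lambda M} = M$), and dually for $G$. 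So $F \dashv G$ on $\mathrm{Ch}(R)$.

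Next I would check the Quillen adjunction condition. In $\mathcal{M}_{sing}^{ctr}$ the cofibrations are the monomorphisms with cokernel in $ex\widetilde{\mathcal{P}}$ (equivalently the class of cofibrant objects is $ex\widetilde{\mathcal{P}}$ since every object is trivially fibrant has class $\mathrm{Ch}(R)$, i.e. trivial cofibrations have cokernel in $ex\widetilde{\mathcal{P}} \cap (ex\widetilde{\mathcal{P}})^\perp$... one must be slightly careful: the triple notation $(\mathcal C, \mathcal W, \mathcal F)$ records cofibrant = $\mathcal C$, fibrant = $\mathcal F$, trivial objects = $\mathcal W$). It suffices to show $F$ preserves cofibrations and trivial cofibrations, or dually that $G$ preserves fibrations and trivial fibrations. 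I would argue on the cofibration side: if $P \in ex\widetilde{\mathcal{P}}$ then $\Omega P = P_0/\mathrm{Im}\,d_1^P$ is the relevant cokernel, which — because $P$ is an exact complex of projectives — is a module in the class $\mathcal{GP}$ of modules whose stalk-complex image is cofibrant in $\mathcal M^{co}_{sing}$; more directly, over a left-Gorenstein ring the syzygy module $\Omega P$ of an exact complex of projectives is Gorenstein-projective, hence Gorenstein-injective (left-Gorenstein!), hence its stalk complex $\Lambda\Omega P$ is cofibrant in the singular coderived structure. Checking the behaviour on (trivial) cofibrations then reduces, via standard cotorsion-pair manipulations and the fact that $F$ is exact on the relevant short exact sequences, to this object-level statement together with the analogous statement that $F$ sends objects of $(ex\widetilde{\mathcal P})^\perp$-flavoured trivial cofibrations appropriately.

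Finally, for the Quillen equivalence I would use the criterion: for a cofibrant $X$ in $\mathcal M^{ctr}_{sing}$ and a fibrant $Y$ in $\mathcal M^{co}_{sing}$, a map $FX \to Y$ is a weak equivalence iff its adjunct $X \to GY$ is. Since cofibrant replacement in $\mathcal M^{ctr}_{sing}$ of any $X$ is a special precover $P \twoheadrightarrow X$ with $P \in ex\widetilde{\mathcal P}$ and kernel in $(ex\widetilde{\mathcal P})^\perp$, and fibrant replacement in $\mathcal M^{co}_{sing}$ is dual, the derived functors $\mathbb L F$ and $\mathbb R G$ can be computed as $\Lambda\Omega(\text{proj. resolution})$ and $\Lambda\Theta(\text{inj. resolution})$, and on homotopy categories they become the total left-derived "take the $0$-th syzygy, view as stalk" versus its right adjoint. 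I expect the crux — and the main obstacle — to be showing these derived functors are mutually inverse equivalences $\mathbf K_{ex}(\mathcal P) \simeq \mathbf K_{ex}(\mathcal I)$: concretely, that for $P \in ex\widetilde{\mathcal P}$ the unit $P \to GFP = \Lambda\Theta\Lambda\Omega P$ followed by a fibrant replacement is a weak equivalence, which amounts to the statement that the Gorenstein-projective module $\Omega P$ and the Gorenstein-injective module obtained by resolving it fit into a complete resolution whose "projective side" recovers $P$ up to homotopy — i.e. the heart of the matter is identifying $\mathbf K_{ex}(\mathcal P)$ and $\mathbf K_{ex}(\mathcal I)$ both with the stable module category $\underline{\mathcal{GP}} = \overline{\mathcal{GI}}$ of (Gorenstein projective = Gorenstein injective) modules over the left-Gorenstein ring, and checking $F$, $G$ realize this common identification. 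I would handle this by invoking the known descriptions of the cofibrant–fibrant objects in each singular model structure (exact complexes of projectives resp. injectives, up to the respective homotopy relation) together with the left-Gorenstein hypothesis to match the two stable categories, and then verifying compatibility of $\Omega$, $\Theta$ with the suspension so that the comparison is triangulated.
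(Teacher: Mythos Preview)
Your outline for the adjunction $F\dashv G$ matches the paper's argument. Beyond that, there are two substantive issues.

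\textbf{The Quillen adjunction step has a real gap.} You write that checking cofibrations ``reduces, via \ldots\ the fact that $F$ is exact on the relevant short exact sequences'' to an object-level statement. But $\Omega$ is only right exact, not left exact, so $F=\Lambda\Omega$ does \emph{not} preserve monomorphisms in general. A cofibration in $\mathcal{M}_{sing}^{ctr}$ is a monomorphism $f$ with $\mathrm{Coker}\,f\in ex\widetilde{\mathcal P}$; to show $F(f)$ is a cofibration in $\mathcal{M}_{sing}^{co}$ you must show $F(f)$ is monic. The paper handles this with a separate lemma: since $\mathrm{Coker}\,f$ is acyclic, $f$ is a monic quasi-isomorphism, and a Five Lemma argument on $0\to\mathrm{H}_0\to X_0/\mathrm{Im}\,d_1\to X_0/\mathrm{Ker}\,d_0\to 0$ gives injectivity of $\Omega(f)$. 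You need this or an equivalent argument. Incidentally, your claim ``Gorenstein-projective, hence Gorenstein-injective (left-Gorenstein!)'' is false (e.g.\ over $\mathbb{Z}$), but fortunately irrelevant: in $\mathcal{M}_{sing}^{co}=(\mathrm{Ch}(R),{^\perp}(ex\widetilde{\mathcal I}),ex\widetilde{\mathcal I})$ every object is cofibrant, so there is nothing to check about $F(\mathrm{Coker}\,f)$ for cofibrations. For \emph{trivial} cofibrations one needs $F(\mathrm{Coker}\,f)\in{^\perp}(ex\widetilde{\mathcal I})$, and here $\mathrm{Coker}\,f\in\widetilde{\mathcal P}$ gives $\Omega(\mathrm{Coker}\,f)$ projective, which suffices.

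\textbf{For the Quillen equivalence you take a different route from the paper.} The paper verifies Hovey's criterion (Corollary~1.3.16 of \emph{Model Categories}) directly: it shows (i) $F$ reflects weak equivalences between objects of $ex\widetilde{\mathcal P}$ by a factorization argument reducing to the statement that if $K\in ex\widetilde{\mathcal P}$ and $\Omega(K)\in{^\perp}\mathcal{GI}=\mathcal W$, then $K\in\widetilde{\mathcal P}$; and (ii) for $Y\in ex\widetilde{\mathcal I}$ the composite $FQG(Y)\to FG(Y)\to Y$ is a weak equivalence, by explicitly building a cofibrant replacement $QG(Y)$ from a totally acyclic complex of a special $\mathcal{GP}$-precover of $\mathrm{Ker}\,d_0^Y$ and checking orthogonality to $ex\widetilde{\mathcal I}$ by hand. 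Your plan instead is to factor through the identification $\mathbf{K}_{ex}(\mathcal P)\simeq\underline{\mathcal{GP}}\simeq\overline{\mathcal{GI}}\simeq\mathbf{K}_{ex}(\mathcal I)$. This is legitimate and is essentially the combination of Becker's Propositions~3.1.4 and 3.1.5 (which the paper cites); it buys you the result by assembling known Quillen equivalences rather than computing directly, but at the cost of importing the equivalence $\underline{\mathcal{GP}}\simeq\overline{\mathcal{GI}}$ (which itself requires the Hovey--Gorenstein model structure machinery). The paper's approach is more self-contained but more computational. Your sketch of this part is still too vague to count as a proof: you would need to spell out why $\Omega$ and $\Theta$ are Quillen equivalences to the respective Gorenstein module model structures, and why the middle equivalence is compatible with these.
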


By fundamental results on homotopy categories of model categories (see e.g. \cite[Theorem 1.2.10]{Hov99}), one has triangle-equivalences $\mathrm{Ho}(\mathcal{M}_{sing}^{ctr})\simeq \mathbf{K}(\mathcal{P})$ and  $\mathrm{Ho}(\mathcal{M}_{sing}^{co})\simeq \mathbf{K}(\mathcal{I})$; see \cite{Bec14} or Corollary \ref{cor 1} below. Moreover, a Quillen equivalence of model categories yields an adjoint equivalence of corresponding homotopy categories, see e.g. \cite[Proposition 1.3.13]{Hov99}. Hence we can give an affirmative answer to the above question.

\begin{corollary}
Let $R$ be a left-Gorenstein ring. Then there is an equivalence $F^{'}: \mathbf{K}_{ex}(\mathcal{P})\rightarrow \mathbf{K}_{ex}(\mathcal{I})$ which is defined on objects by first taking $F$, and then taking fibrant replacement (= a special $ex\widetilde{\mathcal{I}}$-preenvelope); the inverse $G^{'}: \mathbf{K}_{ex}(\mathcal{I})\rightarrow \mathbf{K}_{ex}(\mathcal{P})$ is defined on objects by first taking $G$, and then taking cofibrant replacement (= a special $ex\widetilde{\mathcal{P}}$-precover).
\end{corollary}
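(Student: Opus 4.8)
The real content of this excerpt is Theorem~\ref{main}; once that is in hand the Corollary is formal, so I first outline a proof of the theorem and then record the short deduction. The plan for Theorem~\ref{main} is the standard three stages: show $(F,G)$ is an adjoint pair, then a Quillen adjunction, then a Quillen equivalence. For the adjunction I would factor $F=\Lambda\Omega$ and $G=\Lambda\Theta$ through the two evident adjunctions: a chain map $X\to\Lambda M$ is the same thing as an $R$-linear map $X_0\to M$ vanishing on $\im d_1^{X}$, i.e. a map $\Omega X\to M$, so $\Omega\dashv\Lambda$; dually, a chain map $\Lambda M\to X$ is the same as an $R$-linear map $M\to\Ker d_0^{X}$, so $\Lambda\dashv\Theta$. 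Composing, $\Hom_{\mathrm{Ch}(R)}(\Lambda\Omega X,Y)\cong\Hom_R(\Omega X,\Theta Y)\cong\Hom_{\mathrm{Ch}(R)}(X,\Lambda\Theta Y)$ naturally, so $F\dashv G$. Note also $\Theta\Lambda\cong\mathrm{id}\cong\Omega\Lambda$, whence $GF\cong F$ and $FG\cong G$, and the unit $X\to GFX$ is, up to this identification, the canonical surjection $X_0\twoheadrightarrow\Omega X$ placed in degree zero (as a chain map $X\to\Lambda\Omega X$); this explicit form of the unit is used below.

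For the Quillen adjunction it suffices to check that $F$ preserves cofibrations and trivial cofibrations of $\mathcal{M}_{sing}^{co}$. Its cofibrations are exactly the monomorphisms (the cofibrant class of $\mathcal{M}_{sing}^{co}$ is all of $\mathrm{Ch}(R)$) and $\Lambda$ is exact, so the first point reduces to: $\Omega$ carries a monomorphism $A\hookrightarrow B$ with $B/A\in ex\widetilde{\mathcal{P}}$ to a monomorphism $\Omega A\to\Omega B$. This is a short diagram chase using only exactness of $B/A$ at degree $1$ — if $a\in A_0$ equals $d_1^{B}(b)$, then $b+A_1$ is a cycle, hence a boundary, in $(B/A)_1$, which lets one correct $b$ by a boundary to land in $A_1$. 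For trivial cofibrations, the cokernel of a trivial cofibration of $\mathcal{M}_{sing}^{ctr}$ lies in $ex\widetilde{\mathcal{P}}\cap(ex\widetilde{\mathcal{P}})^{\perp}$, i.e. is a contractible complex of projectives, on which $\Omega$ takes as value a projective module $P$; since $\Omega$ is right exact, $F$ of such a trivial cofibration is a monomorphism with cokernel the stalk complex $\Lambda P$, and one checks directly that $\Lambda P\in{}^{\perp}(ex\widetilde{\mathcal{I}})$: resolving $\Lambda P$ by disk complexes, which are projective in $\mathrm{Ch}(R)$ since $P$ is, one finds $\Ext^{i}_{\mathrm{Ch}(R)}(\Lambda P,E)\cong\Hom_R\bigl(P,H_{-i}(E)\bigr)=0$ for all $i\ge 1$ and all exact $E$. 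Hence $(F,G)$ is a Quillen adjunction and $G$ is automatically right Quillen.

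The substance is the Quillen-equivalence condition, and this is where the hypothesis that $R$ is left-Gorenstein is used. By a standard criterion for Quillen equivalences (cf. \cite[Corollary 1.3.16]{Hov99}) it suffices to show that $G$ reflects weak equivalences between fibrant objects of $\mathcal{M}_{sing}^{co}$ and that, for every cofibrant $X$, the derived unit $X\to G(RFX)$ is a weak equivalence, where $RFX$ is a fibrant replacement of $FX$. Via the identifications $\mathrm{Ho}(\mathcal{M}_{sing}^{ctr})\simeq\mathbf{K}_{ex}(\mathcal{P})$ and $\mathrm{Ho}(\mathcal{M}_{sing}^{co})\simeq\mathbf{K}_{ex}(\mathcal{I})$ (Corollary~\ref{cor 1}) I would make this concrete: for $X\in ex\widetilde{\mathcal{P}}$ a fibrant replacement of $FX=\Lambda\Omega X$ is precisely a special $ex\widetilde{\mathcal{I}}$-preenvelope $\Lambda\Omega X\hookrightarrow E_X$ with $E_X$ exact of injectives and cokernel in ${}^{\perp}(ex\widetilde{\mathcal{I}})$ — morally a complete injective resolution of the module $\Omega X$ — so that $G(RFX)=\Lambda(\Ker d_0^{E_X})$ and the derived unit is $X_0\twoheadrightarrow\Omega X\to\Ker d_0^{E_X}$ in degree zero. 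The claim to establish is that a cofibrant replacement (= special $ex\widetilde{\mathcal{P}}$-precover) of this stalk complex is chain-homotopy-equivalent to $X$ via the derived unit; the point is that over a left-Gorenstein ring the kernels and cokernels arising in these (co)resolutions — the ``defect'' by which the comparison map differs from an isomorphism — are modules of finite projective, equivalently finite injective, dimension, so the associated complexes lie in $(ex\widetilde{\mathcal{P}})^{\perp}$, resp. ${}^{\perp}(ex\widetilde{\mathcal{I}})$, i.e. are trivial in the model structures. Equivalently, one can organise this through the cycle functors: over a left-Gorenstein ring $\Omega$ and $\Theta$ identify $\mathbf{K}_{ex}(\mathcal{P})$ and $\mathbf{K}_{ex}(\mathcal{I})$ with the stable categories of Gorenstein projective and Gorenstein injective modules, which are equivalent, and $F,G$ realise that equivalence; one then checks the compatibilities. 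The reflection property is dual, and in fact follows once the derived unit and counit are both known to be weak equivalences, since then the functors $\mathbf{L}F$ and $\mathbf{R}G$ are already inverse equivalences.

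The adjunction and the Quillen-adjunction verification are bookkeeping; the main obstacle is this last step — producing the homotopy inverse explicitly and proving the derived unit and counit are weak equivalences — which forces one to pin down the special $ex\widetilde{\mathcal{I}}$-preenvelope of a stalk complex and to see exactly how ``left-Gorenstein'' makes the occurring syzygies small enough to become trivial; some finiteness input on $R$ of this kind is unavoidable, since otherwise $GF$ is not homotopy-invertible. Finally, granting Theorem~\ref{main}, the Corollary is immediate: a Quillen equivalence induces an adjoint equivalence $\mathbf{L}F\colon\mathrm{Ho}(\mathcal{M}_{sing}^{ctr})\rightleftarrows\mathrm{Ho}(\mathcal{M}_{sing}^{co})\colon\mathbf{R}G$ by \cite[Proposition 1.3.13]{Hov99}; every complex in $ex\widetilde{\mathcal{P}}$ is already cofibrant--fibrant in $\mathcal{M}_{sing}^{ctr}$ and every complex in $ex\widetilde{\mathcal{I}}$ already cofibrant--fibrant in $\mathcal{M}_{sing}^{co}$, so on objects $\mathbf{L}F$ amounts to applying $F$ and then a fibrant replacement (a special $ex\widetilde{\mathcal{I}}$-preenvelope) and $\mathbf{R}G$ to applying $G$ and then a cofibrant replacement (a special $ex\widetilde{\mathcal{P}}$-precover). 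Setting $F'=\mathbf{L}F$ and $G'=\mathbf{R}G$ and transporting along $\mathrm{Ho}(\mathcal{M}_{sing}^{ctr})\simeq\mathbf{K}_{ex}(\mathcal{P})$ and $\mathrm{Ho}(\mathcal{M}_{sing}^{co})\simeq\mathbf{K}_{ex}(\mathcal{I})$ yields exactly the asserted equivalence $F'\colon\mathbf{K}_{ex}(\mathcal{P})\to\mathbf{K}_{ex}(\mathcal{I})$ with inverse $G'$.
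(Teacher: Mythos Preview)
Your deduction of the Corollary from Theorem~\ref{main} is correct and is exactly what the paper does: invoke \cite[Proposition 1.3.13]{Hov99} to get $\mathbf{L}F\dashv\mathbf{R}G$ as an equivalence of homotopy categories, identify these with $\mathbf{K}_{ex}(\mathcal{P})$ and $\mathbf{K}_{ex}(\mathcal{I})$ via Corollary~\ref{cor 1}, and read off the description of $F'$ and $G'$ on objects. No further comment needed on that part.

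For the Theorem itself, your adjunction and Quillen-adjunction arguments coincide with the paper's (Proposition~\ref{prop 1} and Lemma~\ref{lem 2}), but at the decisive step you pick the opposite half of Hovey's criterion \cite[Corollary 1.3.16]{Hov99}: you propose to show that $G$ reflects weak equivalences between fibrant objects and that the derived \emph{unit} $X\to G(RFX)$ is a weak equivalence for cofibrant $X$, whereas the paper shows that $F$ reflects weak equivalences between cofibrant objects (Lemma~\ref{lem 3}) and that the derived \emph{counit} $FQG(Y)\to Y$ is a weak equivalence for fibrant $Y$ (Lemmas~\ref{lem 4} and~\ref{lem 5}). Both routes are legitimate and essentially dual; the paper's choice has the mild advantage that the cofibrant replacement $QG(Y)$ is constructed by hand from a special $\mathcal{GP}$-precover of $\Theta(Y)$ and a totally acyclic complex, which makes the appearance of the cotorsion pair $(\mathcal{GP},\mathcal{W})$ and the finiteness of the ``defect'' module completely explicit.

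What is genuinely missing from your outline is the execution of that last step. You correctly identify that the obstruction lives in a module of finite projective/injective dimension, but you do not actually \emph{prove} either that $G$ reflects weak equivalences between objects of $ex\widetilde{\mathcal{I}}$ or that the derived unit is a weak equivalence; the sentence ``the reflection property is dual, and in fact follows once the derived unit and counit are both known to be weak equivalences'' is circular, since showing both unit and counit are derived weak equivalences is already the full Quillen-equivalence statement. The paper's Lemmas~\ref{lem 3}--\ref{lem 5} are where the left-Gorenstein hypothesis does real work (via the cotorsion pairs $(\mathcal{GP},\mathcal{W})$ and $(\mathcal{W},\mathcal{GI})$ and the fact that a Gorenstein projective module of finite projective dimension is projective), and your proposal would need an analogous pair of concrete lemmas to be complete.
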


We remark that part of the above equivalence is known. By \cite[Proposition 3.1.4 and  3.1.5]{Bec14}, $\Omega$ is a Quillen equivalence between the singular contraderived model category and Gorenstein projective model category, and $\Theta$ is a Quillen equivalence between the singular coderived model category and Gorenstein injective model categories; see \cite[Theorem 8.6]{Hov02} for details on Gorenstein projective and Gorenstein injective model categories. Analogously, the equivalences were proved in \cite[Theorem 5.8 and 8.8]{BGH} with respect to Gorenstein AC-projective and Gorenstein AC-injective model structures. However, it seems that there is no reference to check the Quillen equivalence in Theorem \ref{main} specifically.

It is well known that over a left-Gorenstein ring $R$, the exact complex of projective (injective) modules is precisely the totally acyclic complex of projective (injective) modules. The equivalence between homotopy category of totally acyclic complexes $\mathbf{K}_{tac}(\mathcal{P})$ of projective modules and singularity category $\mathbf{D}_{sg}(R)$ for an artin ring or a commutative noetherian local ring was studied in \cite{BJO15}. If the base ring is Iwanaga-Gorenstein, it is proved in \cite[Theorem 4.4.1]{Buc} that the syzygy functor $\Omega$ gives an equivalence between homotopy category of (totally) acyclic complexes of projective modules and the stable category of Gorenstein projective modules.

\subsection*{Question} Let $\mathcal{GI}$ and $\mathcal{GP}$ denote the classes of Gorenstein injective and Gorenstein projective modules, respectively. There is a ``Gorenstein version'' of the aforementioned recollements in \cite{Gil16}, i.e. $\xymatrix{
\mathbf{K}_{ex}(\mathcal{GI})\ar[r]^{} & \mathbf{K}(\mathcal{GI}) \ar[r]^{}\ar@<-0.6ex>[l]^{} \ar@<0.6ex>[l]_{} &\mathbf{D}(R)\ar@<-0.6ex>[l]^{} \ar@<0.6ex>[l]_{}}$ and $\xymatrix{
\mathbf{K}_{ex}(\mathcal{GP})\ar[r]^{} & \mathbf{K}(\mathcal{GP}) \ar[r]^{}\ar@<-0.6ex>[l]^{} \ar@<0.6ex>[l]_{} &\mathbf{D}(R)\ar@<-0.6ex>[l]^{} \ar@<0.6ex>[l]_{}}$.

If the underlying ring is left-Gorenstein, it follows from \cite{Chen10} that $\mathbf{K}(\mathcal{GP})\simeq \mathbf{K}(\mathcal{GI})$; we also recovered this equivalence in \cite{Ren19}. However, we do not know if it is true that $\mathbf{K}_{ex}(\mathcal{GP})\simeq \mathbf{K}_{ex}(\mathcal{GI})$. We remark that one can not get an answer by simply restricting the equivalent functor $\mathbf{K}(\mathcal{GP})\simeq \mathbf{K}(\mathcal{GI})$ in \cite{Chen10}, or by the methods in \cite{Ren19}.

\section { \bf The proof of the theorem}

Throughout the paper, let $R$ be a left-Gorenstein ring. All modules are left $R$-modules.

Let $\mathcal{A}$ be an abelian category with enough projectives and injectives. A pair of classes $(\mathcal{X}, \mathcal{Y})$ in $\mathcal{A}$ is a cotorsion pair provided that $\mathcal{X} =  {^\perp}\mathcal{Y}$ and $\mathcal{Y} = \mathcal{X}^{\perp}$, where $^{\perp}\mathcal{Y} = \{X \mid \mathrm{Ext}^{1}_{\mathcal{A}}(X, Y) = 0,~~\forall~~Y\in \mathcal{Y}\}$ and
$\mathcal{X}^{\perp} = \{Y \mid \mathrm{Ext}^{1}_{\mathcal{A}}(X, Y) = 0,~~\forall~~X\in \mathcal{X}\}$.

The cotorsion pair $(\mathcal{X}, \mathcal{Y})$ is complete provided that for any $M\in \mathcal{A}$,
there exist short exact sequences $0\rightarrow Y\rightarrow X \stackrel{f}\rightarrow M \rightarrow 0$ and  $0\rightarrow M\stackrel{g}\rightarrow Y^{'} \rightarrow X^{'} \rightarrow 0$ with $X, X^{'}\in \mathcal{X}$ and $Y, Y^{'}\in \mathcal{Y}$. In this case, for any $N\in \mathcal{X}$, $\mathrm{Hom}_{\mathcal{A}}(N, f): \mathrm{Hom}_{\mathcal{A}}(N, X)\rightarrow \mathrm{Hom}_{\mathcal{A}}(N, M)$ is surjective since $\mathrm{Ext}^{1}_{\mathcal{A}}(N, Y) = 0$, and then $f: X\rightarrow M$ is said to be a special $\mathcal{X}$-precover of $M$. Dually, $g: M\rightarrow Y^{'}$ is called a special $\mathcal{Y}$-preenvelope of $M$.

By \cite[Theorem 2.2]{Hov02}, an abelian model structure on $\mathcal{A}$ is equivalent to a triple $(\mathcal{A}_{c}, \mathcal{A}_{tri}, \mathcal{A}_{f})$ of subcategories, for which $\mathcal{A}_{tri}$ is thick and both $(\mathcal{A}_{c}, \mathcal{A}_{f}\cap \mathcal{A}_{tri})$ and $(\mathcal{A}_{c}\cap \mathcal{A}_{tri}, \mathcal{A}_{f})$ are complete cotorsion pairs; see also \cite[Chapter VIII]{BR07}. In this case, $\mathcal{A}_{c}$ is the class of cofibrant objects, $\mathcal{A}_{tri}$ is the class of trivial objects and $\mathcal{A}_{f}$ is the class of fibrant objects. The model structure is called ``abelian'' since it is compatible with the abelian structure of the category in the following way: (trivial) cofibrations are monomorphisms with (trivially) cofibrant cokernel, (trivial) fibrations are epimorphisms with (trivially) fibrant kernel, and weak equivalences are morphisms which factor as a trivial cofibratin followed by a trivial fibration.

For convenience, we will use the triple $(\mathcal{A}_{c}, \mathcal{A}_{tri}, \mathcal{A}_{f})$ to denote the corresponding model structure. The following is immediate from \cite[Section 2]{Bec14} or \cite[Theorem 4.7]{Gil08}.

\begin{lemma}\label{lem 1}
On the category $\mathrm{Ch}(R)$ of complexes, there is a singular contraderived model structure
$\mathcal{M}_{sing}^{ctr} = (ex\widetilde{\mathcal{P}}, (ex\widetilde{\mathcal{P}})^{\perp}, \mathrm{Ch}(R))$, and a singular coderived model structure $\mathcal{M}_{sing}^{co} = (\mathrm{Ch}(R), {^{\perp}}(ex\widetilde{\mathcal{I}}), ex\widetilde{\mathcal{I}})$.
\end{lemma}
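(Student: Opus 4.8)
The plan is to verify, for each of the two prescribed triples, the hypotheses of Hovey's correspondence \cite[Theorem 2.2]{Hov02}: for the contraderived triple $(ex\widetilde{\mathcal{P}},(ex\widetilde{\mathcal{P}})^{\perp},\mathrm{Ch}(R))$ one must exhibit $(ex\widetilde{\mathcal{P}},(ex\widetilde{\mathcal{P}})^{\perp})$ and $(ex\widetilde{\mathcal{P}}\cap(ex\widetilde{\mathcal{P}})^{\perp},\mathrm{Ch}(R))$ as complete cotorsion pairs on $\mathrm{Ch}(R)$ and check that $(ex\widetilde{\mathcal{P}})^{\perp}$ is thick, while the coderived triple is obtained by the order-reversing version of the same argument, carried out inside the Grothendieck category $\mathrm{Ch}(R)$. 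This is exactly what is done in \cite[Section 2]{Bec14} and \cite[Theorem 4.7]{Gil08}; I indicate the steps and where the real content sits.

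The only substantial input is that $(ex\widetilde{\mathcal{P}},(ex\widetilde{\mathcal{P}})^{\perp})$ is a complete hereditary cotorsion pair on $\mathrm{Ch}(R)$ whose core $ex\widetilde{\mathcal{P}}\cap(ex\widetilde{\mathcal{P}})^{\perp}$ is the class $\mathrm{Proj}(\mathrm{Ch}(R))$ of projective objects of $\mathrm{Ch}(R)$, that is, the contractible complexes with projective components. Completeness is produced by the Eklof--Trlifaj small object argument once one knows $ex\widetilde{\mathcal{P}}$ is deconstructible --- closed under transfinite extensions, with every member filtered by members of bounded cardinality; this is available in the literature and, over a left-Gorenstein ring, $ex\widetilde{\mathcal{P}}$ may equivalently be taken to be the class of totally acyclic complexes of projectives. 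Hereditariness is the elementary observation that if $0\to A\to B\to C\to 0$ is exact with $B,C\in ex\widetilde{\mathcal{P}}$, then $A$ is exact and each $0\to A_n\to B_n\to C_n\to 0$ splits ($C_n$ being projective), so $A_n$ is projective and $A\in ex\widetilde{\mathcal{P}}$. For the core: the inclusion $\mathrm{Proj}(\mathrm{Ch}(R))\subseteq ex\widetilde{\mathcal{P}}\cap(ex\widetilde{\mathcal{P}})^{\perp}$ holds since a projective object is a contractible complex of projectives, hence lies in $ex\widetilde{\mathcal{P}}$, and since every $E\in ex\widetilde{\mathcal{P}}$ is degreewise projective one has $\mathrm{Ext}^1_{\mathrm{Ch}(R)}(E,-)\cong\mathrm{Hom}_{\mathbf{K}(R)}(E,\Sigma-)$, which vanishes on contractible complexes; conversely, if $X\in ex\widetilde{\mathcal{P}}\cap(ex\widetilde{\mathcal{P}})^{\perp}$ then the short exact sequence $0\to X\to\mathrm{cone}(\mathrm{id}_X)\to\Sigma X\to 0$ has $\Sigma X\in ex\widetilde{\mathcal{P}}$, so it splits (as $X\in(ex\widetilde{\mathcal{P}})^{\perp}$) and $X$ is a direct summand of the contractible complex of projectives $\mathrm{cone}(\mathrm{id}_X)$, whence $X\in\mathrm{Proj}(\mathrm{Ch}(R))$.

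Granting this, the contraderived triple assembles formally: the pair $(ex\widetilde{\mathcal{P}}\cap(ex\widetilde{\mathcal{P}})^{\perp},\mathrm{Ch}(R))=(\mathrm{Proj}(\mathrm{Ch}(R)),\mathrm{Ch}(R))$ is the trivial cotorsion pair, complete because $\mathrm{Ch}(R)$ has enough projectives; and thickness of $(ex\widetilde{\mathcal{P}})^{\perp}$ follows from the general fact that a complete hereditary cotorsion pair whose core is the class of projectives --- a projective cotorsion pair in the sense of \cite{Bec14} --- gives rise to the abelian model structure $(\mathcal{C},\mathcal{C}^{\perp},\mathrm{Ch}(R))$, the nontrivial point of which is precisely that thickness, proved there by means of hereditariness. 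Dualising every step inside the Grothendieck category $\mathrm{Ch}(R)$ --- interchanging ``projective'' with ``injective'', $\mathrm{cone}(\mathrm{id}_X)$ with $\mathrm{cone}(\mathrm{id}_{\Sigma^{-1}X})$, and the small object argument with its injective counterpart --- shows that $({^{\perp}}(ex\widetilde{\mathcal{I}}),ex\widetilde{\mathcal{I}})$ is an injective cotorsion pair with core $\mathrm{Inj}(\mathrm{Ch}(R))$, and hence yields $\mathcal{M}_{sing}^{co}=(\mathrm{Ch}(R),{^{\perp}}(ex\widetilde{\mathcal{I}}),ex\widetilde{\mathcal{I}})$. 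I expect the genuine obstacles to be the completeness of these two cotorsion pairs --- the set-theoretic deconstructibility input --- and the thickness of the orthogonal classes (Becker's argument via hereditariness); once those are in hand, everything else is either formal from Hovey's correspondence or one of the short computations above.
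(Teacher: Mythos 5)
Your proposal is correct and follows exactly the route of the sources the paper itself invokes: the paper offers no proof of this lemma beyond citing \cite[Section 2]{Bec14} and \cite[Theorem 4.7]{Gil08}, and your outline (Hovey's correspondence applied to a projective, respectively injective, cotorsion pair with core the projective, respectively injective, objects of $\mathrm{Ch}(R)$, with completeness supplied by deconstructibility and the small object argument, and thickness by Becker's hereditariness argument) is a faithful reconstruction of what those references do. The cone-splitting computation of the core and the identification of the second cotorsion pair as the trivial one are both correct, so there is nothing to add.
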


For a bicomplete abelian category $\mathcal{A}$ with the model structure $\mathcal{M} = (\mathcal{A}_{c}, \mathcal{A}_{tri}, \mathcal{A}_{f})$, the associated homotopy category $\mathrm{Ho}(\mathcal{M})$ is constructed by localization with respect to weak equivalences. The homotopy category of an abelian model category is always a triangulated category. There is an equivalence of categories $\mathrm{Ho}(i): \mathcal{A}_{cf}/\omega = {\mathcal{A}_{cf}/\sim}\rightarrow \mathrm{Ho}(\mathcal{M})$ induced by the inclusion functor $i: \mathcal{A}_{cf}\rightarrow \mathcal{A}$, where $\mathcal{A}_{cf}=\mathcal{A}_{c}\cap \mathcal{A}_{f}$, $f\sim g: M\rightarrow N$ if $g-f$ factors through an object in $\omega = \mathcal{A}_{c}\cap \mathcal{A}_{tri}\cap \mathcal{A}_{f}$; see e.g. \cite[Section 1.2]{Hov99}.

\begin{corollary}\label{cor 1}
There are equivalences $\mathrm{Ho}(\mathcal{M}_{sing}^{ctr})\simeq \mathbf{K}_{ex}(\mathcal{P})$ and
$\mathrm{Ho}(\mathcal{M}_{sing}^{co})\simeq \mathbf{K}_{ex}(\mathcal{I})$.
\end{corollary}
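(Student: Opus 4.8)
The plan is to read off both equivalences from the general identification $\mathrm{Ho}(\mathcal{M})\simeq\mathcal{A}_{cf}/\omega$ recalled just above, after observing that in each of the two model structures of Lemma~\ref{lem 1} one of the two conditions ``cofibrant'' / ``fibrant'' is vacuous, so that $\mathcal{A}_{cf}$ is literally the category of exact complexes of projectives (resp. injectives) and the relation $\sim$ becomes the chain-homotopy relation.

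Concretely, for $\mathcal{M}_{sing}^{ctr}=(ex\widetilde{\mathcal{P}},(ex\widetilde{\mathcal{P}})^{\perp},\mathrm{Ch}(R))$ every complex is fibrant, so $\mathcal{A}_{cf}=ex\widetilde{\mathcal{P}}$, which has exactly the objects of $\mathbf{K}_{ex}(\mathcal{P})$. The core is $\omega=\mathcal{A}_{c}\cap\mathcal{A}_{tri}\cap\mathcal{A}_{f}=ex\widetilde{\mathcal{P}}\cap(ex\widetilde{\mathcal{P}})^{\perp}$, and since $(\mathcal{A}_{c}\cap\mathcal{A}_{tri},\mathcal{A}_{f})=(\omega,\mathrm{Ch}(R))$ is a complete cotorsion pair we get $\omega={^{\perp}}\mathrm{Ch}(R)$, the class of projective objects of $\mathrm{Ch}(R)$, which is well known to coincide with the class of contractible complexes of projective $R$-modules. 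The heart of the argument is then the claim: for chain maps $f,g\colon X\to Y$ with $X,Y\in ex\widetilde{\mathcal{P}}$, the difference $g-f$ factors through an object of $\omega$ if and only if $g-f$ is null-homotopic. For ``if'', a null-homotopic chain map factors through the mapping cone $\mathrm{cone}(\mathrm{id}_{X})$, whose degree-$n$ term $X_{n-1}\oplus X_{n}$ is projective because $X$ is a complex of projectives, and which is contractible, hence a projective complex lying in $\omega$. For ``only if'', any morphism factoring through an object of $\omega$ factors through a contractible complex, and is therefore null-homotopic, since $\mathrm{id}$ on a contractible complex is null-homotopic and the null-homotopic morphisms form an ideal. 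Consequently $\mathcal{A}_{cf}/\omega$ agrees, as a category, with the full subcategory $\mathbf{K}_{ex}(\mathcal{P})$ of $\mathbf{K}(R)$, and the equivalence $\mathrm{Ho}(i)$ quoted above upgrades this to a triangle-equivalence $\mathrm{Ho}(\mathcal{M}_{sing}^{ctr})\simeq\mathbf{K}_{ex}(\mathcal{P})$.

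The second equivalence is obtained by the same argument, dualized. For $\mathcal{M}_{sing}^{co}=(\mathrm{Ch}(R),{^{\perp}}(ex\widetilde{\mathcal{I}}),ex\widetilde{\mathcal{I}})$ every complex is cofibrant, so $\mathcal{A}_{cf}=ex\widetilde{\mathcal{I}}$; reading the core off the first cotorsion pair $(\mathcal{A}_{c},\mathcal{A}_{f}\cap\mathcal{A}_{tri})=(\mathrm{Ch}(R),{^{\perp}}(ex\widetilde{\mathcal{I}})\cap ex\widetilde{\mathcal{I}})$ gives $\omega=\mathrm{Ch}(R)^{\perp}={^{\perp}}(ex\widetilde{\mathcal{I}})\cap ex\widetilde{\mathcal{I}}$, the class of injective objects of $\mathrm{Ch}(R)$, i.e. the contractible complexes of injective $R$-modules. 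Exactly as before, for $X,Y\in ex\widetilde{\mathcal{I}}$ a chain map $X\to Y$ factors through an object of $\omega$ if and only if it is null-homotopic, using again that $\mathrm{cone}(\mathrm{id}_{X})$ has terms $X_{n-1}\oplus X_{n}$, now injective because $X$ is a complex of injectives. Hence $\mathrm{Ho}(\mathcal{M}_{sing}^{co})\simeq\mathbf{K}_{ex}(\mathcal{I})$.

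Nothing here is deep. The one point that deserves care is the description of the core $\omega$ as the contractible complexes of projective (resp. injective) modules, together with the resulting coincidence of ``factors through $\omega$'' with ``chain homotopic to zero''. That coincidence genuinely uses the hypothesis that $X$ lies in $ex\widetilde{\mathcal{P}}$ (resp. $ex\widetilde{\mathcal{I}}$): this is precisely what guarantees that the contractible complex $\mathrm{cone}(\mathrm{id}_{X})$ — through which all null-homotopies factor — is itself a complex of projectives (resp. injectives), and hence truly an object of $\omega$; the exactness required for membership in $ex\widetilde{\mathcal{P}}$ (resp. $ex\widetilde{\mathcal{I}}$) is automatic, as contractible complexes are exact.
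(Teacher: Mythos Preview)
Your proof is correct and follows essentially the same route as the paper: identify $\mathcal{A}_{cf}$ and the core $\omega$ in each model structure, then argue that the relation ``factors through $\omega$'' coincides with chain homotopy. The paper simply asserts $ex\widetilde{\mathcal{P}}\cap(ex\widetilde{\mathcal{P}})^{\perp}=\widetilde{\mathcal{P}}$ and $ex\widetilde{\mathcal{I}}\cap{^{\perp}}(ex\widetilde{\mathcal{I}})=\widetilde{\mathcal{I}}$ and only spells out the easy implication (factoring through a contractible complex forces null-homotopy), whereas you supply the converse via the cone $\mathrm{cone}(\mathrm{id}_X)$ and also justify the description of $\omega$ through the cotorsion-pair identity $\omega={^{\perp}}\mathrm{Ch}(R)$; these are useful details the paper leaves to the reader.
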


\begin{proof}
We use $\widetilde{\mathcal{P}}$ (resp. $\widetilde{\mathcal{I}}$) to denote the subcategory of contractible complexes of projective (resp. injective) modules. It is well known that a complex $P\in \widetilde{\mathcal{P}}$ if and only if $P$ is exact and each $\mathrm{Ker}d_{i}^{P}$ is a projective module; similarly, complexes in $\widetilde{\mathcal{I}}$ are characterized. Note that for any chain maps $f$ and $g$, if $g-f$ factors through a complex in $\widetilde{\mathcal{P}}$ (or,  a complex in $\widetilde{\mathcal{I}}$), then $f$ is chain homotopic to $g$, denoted by $f\sim g$. Since $ex\widetilde{\mathcal{P}}\cap (ex\widetilde{\mathcal{P}})^{\perp}=\widetilde{\mathcal{P}}$ and $ex\widetilde{\mathcal{I}}\cap {^{\perp}}(ex\widetilde{\mathcal{I}}) = \widetilde{\mathcal{I}}$, the equivalences hold directly.
\end{proof}

Let $F=\Lambda\Omega$ and $G=\Lambda\Theta$ be functors on $\mathrm{Ch}(R)$, where $\Omega$ and  $\Theta$ are functors from $\mathrm{Ch}(R)$ to $\mathrm{Mod}(R)$ such that for any $X\in \mathrm{Ch}(R)$, $\Omega(X)= X_0/\mathrm{Im}d_{1}^{X}$ and $\Theta(X)= \mathrm{Ker}d_{0}^{X}$. Let $\Lambda: \mathrm{Mod}(R)\rightarrow \mathrm{Ch}(R)$ be a functor which sends every module to a stalk complex concentrated on degree zero.

\begin{lemma}\label{lem 2}
Let $X$, $Y$ be any $R$-complexes, and $f: X\rightarrow Y$ a monomorphism of complexes. If $f$ is a quasi-isomorphism, then $\Omega(f)$ is also a monomorphism of $R$-modules.
\end{lemma}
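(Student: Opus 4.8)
The plan is to realize $\Omega$ as a homology functor, so that it is forced into a long exact sequence. First I would use that $f$ is a monomorphism of complexes, hence degreewise injective, to form the cokernel complex $C := \mathrm{Coker}(f)$, giving a short exact sequence $0 \to X \xrightarrow{f} Y \to C \to 0$ in $\mathrm{Ch}(R)$. Because $f$ is moreover a quasi-isomorphism, the associated long exact homology sequence forces $H_n(C) = 0$ for every $n$; that is, $C$ is exact.

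Next I would observe that $\Omega(X) = X_0/\mathrm{Im}\, d_1^X$ is precisely $H_0$ of the hard truncation $X_{\geq 0}\colon \cdots \to X_2 \to X_1 \to X_0 \to 0$ (and similarly for $Y$ and $C$), and that the morphism $H_0(X_{\geq 0}) \to H_0(Y_{\geq 0})$ induced by $f_{\geq 0}$ is exactly $\Omega(f)$. Since the hard truncation $(-)_{\geq 0}$ is an exact functor on $\mathrm{Ch}(R)$, applying it to the short exact sequence above yields a short exact sequence $0 \to X_{\geq 0} \to Y_{\geq 0} \to C_{\geq 0} \to 0$, whose long exact homology sequence contains the segment
\[
H_1(C_{\geq 0}) \longrightarrow \Omega(X) \xrightarrow{\ \Omega(f)\ } \Omega(Y).
\]
Since $H_1(C_{\geq 0}) = H_1(C) = 0$ by the previous step, exactness at $\Omega(X)$ gives $\mathrm{Ker}\,\Omega(f) = 0$, i.e. $\Omega(f)$ is a monomorphism.

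There is essentially no serious obstacle here: the only point requiring a little care is the bookkeeping identification of $\Omega$ with $H_0 \circ (-)_{\geq 0}$ (and of the induced map with $\Omega(f)$), after which exactness of $C$ does all the work. If one prefers to avoid truncations, the same conclusion follows from a direct diagram chase in $0 \to X \to Y \to C \to 0$: given $x \in X_0$ with $f_0(x) = d_1^Y(y)$, push $y$ to $C_1$, use exactness of $C$ at $C_1$ to write its image as $d_2^C(\bar z)$, lift $\bar z$ to $z \in Y_2$, replace $y$ by $y - d_2^Y(z)$ so that it comes from some $w \in X_1$, and conclude $d_1^X(w) = x$ using that $f_0$ is injective.
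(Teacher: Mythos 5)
Your proof is correct, but it takes a genuinely different route from the paper's. The paper works degreewise with a single snake/five-lemma argument: it fits $\Omega(f)$ into the map of short exact sequences
$0 \to H_0(-) \to \Omega(-) \to (-)_0/\mathrm{Ker}\,d_0 \to 0$, notes that the left vertical map is $H_0(f)$ (an isomorphism since $f$ is a quasi-isomorphism) and that the right vertical map, identified with $\mathrm{Im}\,d_0^X \to \mathrm{Im}\,d_0^Y$, is monic because $f$ is degreewise monic, and concludes by the five lemma. Your argument instead globalizes: you form the exact cokernel complex $C$, realize $\Omega$ as $H_0$ of the hard truncation $(-)_{\geq 0}$, and read off injectivity of $\Omega(f)$ from the segment $H_1(C_{\geq 0}) \to \Omega(X) \to \Omega(Y)$ of the long exact sequence, using $H_1(C_{\geq 0}) = H_1(C) = 0$. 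Both are sound; the identification $\Omega = H_0 \circ (-)_{\geq 0}$ and the observation $H_1(C_{\geq 0}) = H_1(C)$ that you flag are indeed the only points needing care, and they check out. The paper's version is slightly more elementary and self-contained (one diagram, no truncation functor), while yours is more conceptual and makes the role of exactness of $\mathrm{Coker}\,f$ transparent; your supplementary diagram chase in the final sentence is also a correct stand-alone proof and is essentially the unwound form of your long-exact-sequence argument.
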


\begin{proof}
We consider the following commutative diagram
$$\xymatrix{
0\ar[r] & \mathrm{Ker}d_{0}^{X} / \mathrm{Im}d_{1}^{X} \ar[r]\ar[d] & X_{0} / \mathrm{Im}d_{1}^{X} \ar[r]\ar[d]_{\Omega(f)}
& X_{0} / \mathrm{Ker}d_{0}^{X}\ar[r]\ar[d] &0\\
0\ar[r] & \mathrm{Ker}d_{0}^{Y} / \mathrm{Im}d_{1}^{Y} \ar[r] & Y_{0} / \mathrm{Im}d_{1}^{Y} \ar[r] & Y_{0} / \mathrm{Ker}d_{0}^{Y}\ar[r] &0 }$$
Since $f$ is a quasi-isomorphism, we have an isomorphism induced by $f$:
$$\mathrm{H}_0(f): \mathrm{H}_0(X)=\mathrm{Ker}d_{0}^{X} / \mathrm{Im}d_{1}^{X}\longrightarrow \mathrm{Ker}d_{0}^{Y} / \mathrm{Im}d_{1}^{Y}=\mathrm{H}_0(Y).$$
The chain map $f$ is monic, then the induced map of modules $X_{0} / \mathrm{Ker}d_{0}^{X}\cong \mathrm{Im}d_{0}^{X}\longrightarrow \mathrm{Im}d_{0}^{Y}\cong Y_{0} / \mathrm{Ker}d_{0}^{Y}$ is also monic. Hence, by the ``Five Lemma'' for the above diagram, we get that $\Omega(f): X_{0} / \mathrm{Im}d_{1}^{X}\longrightarrow Y_{0} / \mathrm{Im}d_{1}^{Y}$ is a monomorphism. We mention that it is also direct to check injectivity of $\Omega(f)$ by diagram chasing.
\end{proof}

For model categories $\mathcal{C}$ and $\mathcal{D}$, recall that an adjunction $(F, G): \mathcal{C}\rightarrow \mathcal{D}$ is a Quillen adjunction if $F$ is a left Quillen functor, or equivalently $G$ is a right Quillen functor. That is, $F$ preserves cofibrations and trivial cofibrations, or $G$ preserves fibrations and trivial fibrations.

\begin{proposition}\label{prop 1}
$(F, G): (\mathrm{Ch}(R), \mathcal{M}_{sing}^{ctr})\rightarrow (\mathrm{Ch}(R), \mathcal{M}_{sing}^{co})$ is a Quillen adjunction.
\end{proposition}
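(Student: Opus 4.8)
The plan is to verify that $F = \Lambda\Omega$ is a left Quillen functor, i.e. that $F$ preserves cofibrations and trivial cofibrations; the adjunction $(F,G)$ itself should be recorded first. For the adjointness, I would observe that $\Omega$ is left adjoint to the functor $\mathrm{Mod}(R)\to\mathrm{Ch}(R)$ sending a module $M$ to the complex $\cdots\to 0\to M\xrightarrow{\mathrm{id}} M\to 0\to\cdots$ concentrated in degrees $0,-1$ — no, more carefully, one checks directly from the definitions that $\Lambda$ is left adjoint to $\Theta$ (a map from a stalk complex $\Lambda M$ in degree $0$ into $Y$ is the same as a map $M\to \mathrm{Ker}d_0^Y = \Theta(Y)$), and likewise $\Omega$ is left adjoint to $\Lambda$ (a map $X\to \Lambda M$, i.e. a map $X_0\to M$ killing $\mathrm{Im}d_1^X$ and compatible with $d_0$, which since the target is a stalk forces nothing beyond factoring through $\Omega(X) = X_0/\mathrm{Im}d_1^X$). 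Composing, $F = \Lambda\Omega$ is left adjoint to $G = \Lambda\Theta$.

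Next I would translate the Quillen condition into cotorsion-pair language using Lemma~\ref{lem 1}. In $\mathcal{M}_{sing}^{ctr} = (ex\widetilde{\mathcal{P}}, (ex\widetilde{\mathcal{P}})^{\perp}, \mathrm{Ch}(R))$ every object is fibrant, so cofibrations are exactly the monomorphisms with cokernel in $ex\widetilde{\mathcal{P}}$, and trivial cofibrations are the monomorphisms with cokernel in $ex\widetilde{\mathcal{P}}\cap(ex\widetilde{\mathcal{P}})^{\perp} = \widetilde{\mathcal{P}}$ (contractible complexes of projectives). In $\mathcal{M}_{sing}^{co} = (\mathrm{Ch}(R), {^{\perp}}(ex\widetilde{\mathcal{I}}), ex\widetilde{\mathcal{I}})$ every object is cofibrant, so cofibrations are the monomorphisms whose cokernel lies in $\mathrm{Ch}(R)$ — that is, \emph{all} monomorphisms — and trivial cofibrations are the monomorphisms with cokernel in ${^{\perp}}(ex\widetilde{\mathcal{I}})$. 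So I must show: (i) if $f: X\to Y$ is a monomorphism with $\mathrm{Coker}f\in ex\widetilde{\mathcal{P}}$, then $F(f)$ is a monomorphism of stalk complexes; and (ii) if moreover $\mathrm{Coker}f\in\widetilde{\mathcal{P}}$, then $\mathrm{Coker}F(f)\in{^{\perp}}(ex\widetilde{\mathcal{I}})$.

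For (i): a monomorphism $f$ with $\mathrm{Coker}f$ exact is in particular a quasi-isomorphism, so Lemma~\ref{lem 2} applies and $\Omega(f)$, hence $F(f) = \Lambda\Omega(f)$, is a monomorphism — this is precisely why Lemma~\ref{lem 2} was isolated. For (ii): when $\mathrm{Coker}f = C\in\widetilde{\mathcal{P}}$ is contractible, $C$ is exact with projective cycles, so from the short exact sequence $0\to X\to Y\to C\to 0$ and the long exact homology sequence one gets $\Omega(C) = C_0/\mathrm{Im}d_1^C \cong \mathrm{Ker}d_{-1}^C$, a projective module, and the induced sequence $0\to\Omega(X)\to\Omega(Y)\to\Omega(C)\to 0$ is a short exact sequence of modules (left-exactness from Lemma~\ref{lem 2}, right-exactness since $\Omega$ is right exact, or by a direct diagram chase). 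Hence $\mathrm{Coker}F(f) = \Lambda\Omega(C)$ is a stalk complex on a projective module; such a complex lies in ${^{\perp}}(ex\widetilde{\mathcal{I}})$ because for any exact complex of injectives $E$ one computes $\mathrm{Ext}^1_{\mathrm{Ch}(R)}(\Lambda P, E)$ in terms of $\mathrm{Ext}^1_R(P,-)$ and the exactness/injectivity of $E$, and this vanishes. (Alternatively $\Lambda P$ is a direct sum of discs/stalks which are trivially cofibrant objects, or one invokes that stalk complexes on projective modules are always trivial in $\mathcal{M}_{sing}^{co}$.)

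The main obstacle I anticipate is step (ii), specifically pinning down exactly which $R$-modules $M$ have the property that the stalk complex $\Lambda M$ is a trivial object of $\mathcal{M}_{sing}^{co}$, i.e. lies in ${^{\perp}}(ex\widetilde{\mathcal{I}})$; over a left-Gorenstein ring this should hold at least for all modules of finite projective (equivalently finite injective) dimension, and a projective module certainly qualifies, so the point is to make the $\mathrm{Ext}^1_{\mathrm{Ch}(R)}(\Lambda P, E) = 0$ computation clean rather than to overcome a genuine difficulty. Everything else — adjointness, the monomorphism statements, exactness of $\Omega$ applied to the relevant short exact sequences — is routine given Lemmas~\ref{lem 1} and~\ref{lem 2}.
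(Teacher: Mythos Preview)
Your proposal is correct and follows essentially the same route as the paper: establish the adjunction $(F,G)$ by composing $(\Omega,\Lambda)$ with $(\Lambda,\Theta)$, then verify that $F$ is left Quillen by using Lemma~\ref{lem 2} to see that $F$ takes a cofibration $f$ (a monomorphism with exact cokernel, hence a monic quasi-isomorphism) to a monomorphism---automatically a cofibration in $\mathcal{M}_{sing}^{co}$---and, when $\mathrm{Coker}f\in\widetilde{\mathcal{P}}$, identify $\mathrm{Coker}\,F(f)\cong \Lambda\Omega(\mathrm{Coker}f)$ as a stalk complex on a projective module, which lies in ${^{\perp}}(ex\widetilde{\mathcal{I}})$. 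The only cosmetic difference is that the paper cites \cite[Lemma~3.1]{Gil04} for the two adjunctions and phrases the last vanishing as ``every chain map $\Lambda P\to I$ with $I\in ex\widetilde{\mathcal{I}}$ is null homotopic'', which via the degreewise-split identification $\mathrm{Ext}^{1}_{\mathrm{Ch}(R)}(-,I)\cong \mathrm{Hom}_{\mathbf{K}(R)}(-,\Sigma I)$ is the same statement you outline.
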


\begin{proof}
Let $X$, $Y$ be any $R$-complexes. It follows from \cite[Lemma 3.1]{Gil04} that $(\Omega, \Lambda): \mathrm{Ch}(R)\rightarrow \mathrm{Mod}(R)$
and  $(\Lambda, \Theta): \mathrm{Mod}(R)\rightarrow \mathrm{Ch}(R)$ are adjunctions. Then we have the following natural isomorphisms:
$\mathrm{Hom}_{\mathrm{Ch}(R)}(F(X), Y)\cong \mathrm{Hom}_{R}(\Omega(X), \Theta(Y))\cong \mathrm{Hom}_{\mathrm{Ch}(R)}(X, G(Y))$.
This implies that $(F, G):  \mathrm{Ch}(R)\rightarrow \mathrm{Ch}(R)$ is an adjunction.

It suffices to show that $F$ preserves cofibration and trivial cofibration. Let $f: X\rightarrow Y$ be a cofibration in $\mathcal{M}_{sing}^{ctr}$, i.e. $f$ is a monomorphism with $\mathrm{Coker}f \in ex\widetilde{\mathcal{P}}$. This yields that $f$ is a quasi-isomorphism, and by Lemma \ref{lem 2}, $\Omega(f)$ is monic. Then, we have an exact sequence $$0\longrightarrow F(X)\stackrel{F(f)}\longrightarrow F(Y)\longrightarrow F(\mathrm{Coker}f)\longrightarrow 0.$$ Since every complex is a cofibrant object in $\mathcal{M}_{sing}^{co}$, this implies that $F(f)$ is a cofibration.

Now suppose $f: X\rightarrow Y$ is a trivial cofibration in $\mathcal{M}_{sing}^{ctr}$, i.e. $f$ is a monomorphism with $\mathrm{Coker}f \in ex\widetilde{\mathcal{P}}\cap (ex\widetilde{\mathcal{P}})^{\perp}= \widetilde{\mathcal{P}}$. Then we have an exact sequence $$0\longrightarrow F(X)\stackrel{F(f)}\longrightarrow F(Y)\longrightarrow F(\mathrm{Coker}f)\longrightarrow 0.$$
Note that $\Omega(\mathrm{Coker}f)$ is a projective module. For any complex $I\in ex\widetilde{\mathcal{I}}$, it is easy to show that any chain map $F(\mathrm{Coker}f) = \Lambda\Omega(\mathrm{Coker}f)\rightarrow I$ is null homotopic, and then $F(\mathrm{Coker}f)\in {^{\perp}}(ex\widetilde{\mathcal{I}})$. Thus $F(f)$ is a trivial cofibration in $\mathcal{M}_{sing}^{co}$. This completes the proof.
\end{proof}

Recall that a module $M$ is Gorenstein projective if $M$ is a syzygy of a totally acyclic complex of projective modules; and dually, Gorenstein injective modules are defined; see \cite{EJ00}. We use $\mathcal{GP}$ and $\mathcal{GI}$ to denote the classes of Gorenstein projective and Gorenstein injective modules, respectively. It is widely accepted that over a left-Gorenstein ring, $(\mathcal{GP}, \mathcal{W})$ and $(\mathcal{W}, \mathcal{GI})$ are complete cotorsion pairs, where $\mathcal{W}$ is the class of modules with finite projective (injective) dimension. In \cite[Theorem 2.7]{Ren18} we show that the cotorsion pair $(\mathcal{GP}, \mathcal{W})$ is cogenerated by a set, i.e. there exists a set $S$ such that $\mathcal{W} =\{S\}^{\perp}$. This also implies the completeness of $(\mathcal{GP}, \mathcal{W})$, and generalizes the Gorenstein projective model structure of $\mathrm{Mod}(R)$ in \cite[Theorem 8.3 and 8.6]{Hov02} from Iwanaga-Gorenstein rings to left-Gorenstein rings.

\begin{lemma}\label{lem 3}
Let $X$, $Y$ be complexes in $ex\widetilde{\mathcal{P}}$, and $f: X\rightarrow Y$ a chain map. If $F(f)$ is a weak equivalence in $\mathcal{M}_{sing}^{co}$, then $f$ is a weak equivalence in $\mathcal{M}_{sing}^{ctr}$.
\end{lemma}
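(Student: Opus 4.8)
The plan is to push the statement down to $\mathrm{Mod}(R)$ via the functors $\Omega$ and $\Lambda$, where it becomes a comparison of weak equivalences in two model structures that share the same trivial objects. First I would record the shape of the two sides. Since every complex is fibrant in $\mathcal{M}_{sing}^{ctr}$ and $X,Y\in ex\widetilde{\mathcal{P}}$ are cofibrant, both $X$ and $Y$ are bifibrant, so $f$ is a weak equivalence in $\mathcal{M}_{sing}^{ctr}$ exactly when it is a chain homotopy equivalence --- equivalently, when $\mathrm{Cone}(f)$ is contractible (by Corollary \ref{cor 1} and the homotopy relation discussed in its proof). Here $\mathrm{Cone}(f)$ is automatically an exact complex of projective modules: degreewise projective because $X,Y$ are, and exact because $f$ is a quasi-isomorphism (both $X,Y$ being acyclic). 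So concretely the lemma asks that one cocycle of $\mathrm{Cone}(f)$ be projective, but I would not argue this by hand.

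Instead, using the Quillen equivalences of \cite{Bec14} recalled in the introduction, $\Omega$ underlies a left Quillen equivalence $\mathcal{M}_{sing}^{ctr}\to(\mathcal{GP},\mathcal{W},\mathrm{Mod}(R))$ onto the Gorenstein projective model structure, and $\Lambda$ underlies a left Quillen equivalence $(\mathrm{Mod}(R),\mathcal{W},\mathcal{GI})\to\mathcal{M}_{sing}^{co}$ from the Gorenstein injective model structure. A left Quillen equivalence reflects weak equivalences between cofibrant objects; since $X,Y$ are cofibrant in $\mathcal{M}_{sing}^{ctr}$ and every module is cofibrant in the Gorenstein injective structure, the hypothesis that $F(f)=\Lambda\Omega(f)$ is a weak equivalence in $\mathcal{M}_{sing}^{co}$ is equivalent to $\Omega(f)$ being a weak equivalence in the Gorenstein injective model structure, and the desired conclusion is equivalent to $\Omega(f)$ being a weak equivalence in the Gorenstein projective model structure. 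Hence it suffices to prove: a morphism of $R$-modules that is a weak equivalence in the Gorenstein injective model structure is a weak equivalence in the Gorenstein projective model structure.

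This implication is where I expect the only real work to lie. Both structures have the same class $\mathcal{W}$ of trivial objects, so it ought to hold; to see it, given such a $\phi$, factor $\phi=\sigma\tau$ with $\tau$ a trivial cofibration and $\sigma$ a trivial fibration of the Gorenstein injective structure, say $\tau\colon M\rightarrowtail E$ with $\mathrm{Coker}\,\tau\in\mathcal{W}$ and $\sigma\colon E\twoheadrightarrow N$ with $\mathrm{Ker}\,\sigma$ injective, so in particular $\mathrm{Ker}\,\sigma\in\mathcal{W}$. In the Gorenstein projective structure the trivially fibrant objects are exactly $\mathcal{W}$, so $\sigma$ is already a trivial fibration there. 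The delicate factor is $\tau$, whose cokernel need not be projective: I would choose a special $\mathcal{GP}$-precover $0\to W'\to Q\to\mathrm{Coker}\,\tau\to 0$ (completeness of $(\mathcal{GP},\mathcal{W})$), observe that thickness of $\mathcal{W}$ forces $Q$ to be an extension of two objects of $\mathcal{W}$, hence $Q\in\mathcal{GP}\cap\mathcal{W}$, i.e.\ $Q$ is projective, and then pull back the sequence $0\to M\xrightarrow{\tau}E\to\mathrm{Coker}\,\tau\to 0$ along $Q\to\mathrm{Coker}\,\tau$; this factors $\tau$ as a monomorphism with projective cokernel followed by an epimorphism with kernel $W'\in\mathcal{W}$, i.e.\ as a trivial cofibration followed by a trivial fibration of the Gorenstein projective structure. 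Therefore $\tau$, and with it $\phi=\sigma\tau$, is a weak equivalence in the Gorenstein projective model structure.

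Assembling the three steps --- bifibrancy of $X$ and $Y$, the two Quillen-equivalence reductions, and the comparison of the two Gorenstein model structures --- completes the argument. Apart from the pullback construction handling $\tau$, everything is bookkeeping with the Hovey triples of Lemma \ref{lem 1} and the complete cotorsion pairs $(\mathcal{GP},\mathcal{W})$ and $(\mathcal{W},\mathcal{GI})$; that pullback step, extracting a projective from a cokernel that only lies in $\mathcal{W}$, is the one point I expect to require genuine care.
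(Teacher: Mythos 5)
Your proof is correct, but it takes a genuinely different route from the paper's. The paper proceeds directly inside $\mathcal{M}_{sing}^{ctr}$: it factors $f$ there as a trivial cofibration $i$ followed by a fibration $p$, observes that $K=\mathrm{Ker}\,p$ lies in $ex\widetilde{\mathcal{P}}$, and then uses the adjunction isomorphism $\mathrm{Ext}^1_{\mathrm{Ch}(R)}(\Lambda\Omega(K),I)\cong\mathrm{Ext}^1_R(\Omega(K),\Theta(I))$ together with the hypothesis to conclude that $\Omega(K)$ is both Gorenstein projective and of finite projective dimension, hence projective by \cite[Proposition 10.2.3]{EJ00}; iterating on the syzygies shows $K\in\widetilde{\mathcal{P}}$, so $p$ is a trivial fibration. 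You instead invoke Becker's Quillen equivalences $\Omega:\mathcal{M}_{sing}^{ctr}\to(\mathcal{GP},\mathcal{W},\mathrm{Mod}(R))$ and $\Lambda:(\mathrm{Mod}(R),\mathcal{W},\mathcal{GI})\to\mathcal{M}_{sing}^{co}$ to reduce the entire statement to the module-level fact that the Gorenstein injective and Gorenstein projective model structures on $\mathrm{Mod}(R)$, sharing the trivial class $\mathcal{W}$, have the same weak equivalences; your pullback construction for that comparison is correct, and it hinges on the same key ingredient ($\mathcal{GP}\cap\mathcal{W}$ consists of projectives, again \cite[Proposition 10.2.3]{EJ00}). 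What your route buys is modularity and a cleaner conceptual picture: once Becker's equivalences are granted, the lemma becomes an instance of the general principle that Hovey triples with a common thick class of trivial objects have the same weak equivalences, and indeed this observation (composed with the identity Quillen equivalence between the two module-level structures) would essentially give the whole Theorem at once. What the paper's route buys is self-containedness: it cites Becker's Quillen equivalences in the introduction for context but deliberately does not lean on them in the proof, keeping the argument internal to $\mathrm{Ch}(R)$. Both are valid; yours externalizes more, the paper's internalizes more.
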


\begin{proof}
In the model category $(\mathrm{Ch}(R), \mathcal{M}_{sing}^{ctr})$, we can factor $f: X\rightarrow Y$ as a trivial cofibration $i: X\rightarrow Z$ followed by a fibration $p: Z\rightarrow Y$. By Proposition \ref{prop 1}, $F(i)$ is a trivial cofibration in $\mathcal{M}_{sing}^{co}$, and then $F(i)$ is a weak equivalence. Then $F(f) = F(p)F(i)$ is a weak equivalence if and only if so is $F(p)$.

Let $L = \mathrm{Coker}i$ and $K =  \mathrm{Ker}p$. It follows from the exact sequence $0\longrightarrow X\stackrel{i}\longrightarrow Z\longrightarrow L\longrightarrow 0$ that $Z\in ex\widetilde{\mathcal{P}}$, where  $X\in ex\widetilde{\mathcal{P}}$ and $L\in ex\widetilde{\mathcal{P}}\cap (ex\widetilde{\mathcal{P}})^{\perp} = \widetilde{\mathcal{P}}$. Moreover, it follows from the exact sequence $0\longrightarrow K\longrightarrow Z\stackrel{p}\longrightarrow Y\longrightarrow 0$ that $K\in ex\widetilde{\mathcal{P}}$.

Let $M$ be any Gorenstein injective module. Then there exists a totally acyclic complex of injective module, saying $I$, such that $M\cong \Theta(I)$. It follows from \cite[Lemma 4.2]{Gil08} that there is an isomorphism
$$\mathrm{Ext}^{1}_{\mathrm{Ch}(R)}(F(K), I) = \mathrm{Ext}^{1}_{\mathrm{Ch}(R)}(\Lambda\Omega(K), I)\cong \mathrm{Ext}^{1}_{R}(\Omega(K), \Theta(I)).$$
Since $F(p)$ is a weak equivalence, $F(K) = \mathrm{Ker}(F(p)) \in {^{\perp}(ex\widetilde{\mathcal{I}})}$. For any Gorenstein injective module $M$, it yields that $\mathrm{Ext}^{1}_{R}(\Omega(K), M) = 0$. Since $(\mathcal{W}, \mathcal{GI})$ is a cotorsion pair, we get that $\Omega(K)$ is a module of finite projective dimension. Moreover, $\Omega(K)$ is a Gorenstein projective module since $R$ is a left-Gorenstein ring and $K\in ex\widetilde{\mathcal{P}}$ is a totally acyclic complex of projective modules. By \cite[Proposition 10.2.3]{EJ00}, the projective dimension of any Gorenstein projective module is either zero or infinity, so $\Omega(K)$ is a projective module. Considering exact sequences $0\rightarrow \mathrm{Ker}d_{i}^{K}\rightarrow K_{i}\rightarrow \mathrm{Ker}d_{i-1}^{K}\rightarrow 0$ inductively, we can prove each syzygy of $K$ is projective, that is, $K$ is a complex in $\widetilde{\mathcal{P}}$. This implies that $p: Z\rightarrow Y$ is a trivial fibration, and hence $f = pi$ is a weak equivalence, as desired.
\end{proof}

\begin{lemma}\label{lem 4}
Let $Y$ be an exact complex of injective $R$-modules. Then $\varepsilon: FG(Y)\rightarrow Y$ is a weak equivalence in $\mathcal{M}_{sing}^{co}$, where $\varepsilon$ is the counit of the adjoint pair $(F, G)$.
\end{lemma}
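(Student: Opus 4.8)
The plan is to turn the assertion into the single statement that $\mathrm{Coker}(\varepsilon)$ is a trivial object of $\mathcal{M}_{sing}^{co}$, and then to verify that by two chain-homotopy arguments, one of which leans on Gorenstein homological algebra over $R$. First I would unwind $\varepsilon$. Since $\Theta(Y)=\mathrm{Ker}d^{Y}_{0}$ and since $\Omega$ sends any stalk complex concentrated in degree $0$ back to the module in degree $0$ (there being no incoming differential), one gets $FG(Y)=\Lambda\Omega\Lambda(\mathrm{Ker}d^{Y}_{0})=\Lambda(\mathrm{Ker}d^{Y}_{0})$, again a stalk complex in degree $0$. Tracing the identity of $\mathrm{Ker}d^{Y}_{0}$ back through the adjunction isomorphism $\mathrm{Hom}_{\mathrm{Ch}(R)}(FG(Y),Y)\cong\mathrm{Hom}_{R}(\Theta(Y),\Theta(Y))$ (built from $(\Omega,\Lambda)$ and $(\Lambda,\Theta)$) shows that $\varepsilon\colon\Lambda(\mathrm{Ker}d^{Y}_{0})\to Y$ is the chain map which in degree $0$ is the inclusion $\mathrm{Ker}d^{Y}_{0}\hookrightarrow Y_{0}$ and is zero in all other degrees; in particular $\varepsilon$ is a monomorphism of complexes. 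In $\mathcal{M}_{sing}^{co}=(\mathrm{Ch}(R),{}^{\perp}(ex\widetilde{\mathcal{I}}),ex\widetilde{\mathcal{I}})$ every object is cofibrant, so $\varepsilon$ is a cofibration, and — a trivial cofibration being precisely a monomorphism whose cokernel is trivially cofibrant, as recalled before Lemma~\ref{lem 1}, i.e. here an object of ${}^{\perp}(ex\widetilde{\mathcal{I}})$ — the map $\varepsilon$ is a weak equivalence if and only if $C:=\mathrm{Coker}(\varepsilon)$ lies in ${}^{\perp}(ex\widetilde{\mathcal{I}})$, i.e. $\mathrm{Ext}^{1}_{\mathrm{Ch}(R)}(C,I)=0$ for every $I\in ex\widetilde{\mathcal{I}}$.

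Next I would split the cokernel. Here $C_{0}=Y_{0}/\mathrm{Ker}d^{Y}_{0}$, $C_{n}=Y_{n}$ for $n\neq0$, and the differential $C_{1}\to C_{0}$ is zero because $\mathrm{Im}d^{Y}_{1}\subseteq\mathrm{Ker}d^{Y}_{0}$; consequently $C$ is a direct sum of complexes $C\cong C^{\sharp}\oplus C^{\flat}$, where $C^{\sharp}=(\cdots\to Y_{2}\to Y_{1}\to0)$ is supported in degrees $\geq1$ (a bounded-below complex of injective modules), and $C^{\flat}$ is supported in degrees $\leq0$ with $C^{\flat}_{0}=Y_{0}/\mathrm{Ker}d^{Y}_{0}$, $C^{\flat}_{n}=Y_{n}$ for $n<0$, and differentials induced from $Y$; note $C^{\flat}$ is exact because $Y$ is. Since the modules $I_{n}$ are injective, every short exact sequence $0\to I\to Z\to W\to0$ in $\mathrm{Ch}(R)$ is degreewise split, whence $\mathrm{Ext}^{1}_{\mathrm{Ch}(R)}(W,I)\cong\mathrm{Hom}_{\mathbf{K}(R)}(W,I[1])$ for every complex $W$, and $I[1]$ again lies in $ex\widetilde{\mathcal{I}}$. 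So it suffices to show that every chain map $C^{\sharp}\to I$ and every chain map $C^{\flat}\to I$ is null-homotopic, for all $I\in ex\widetilde{\mathcal{I}}$.

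For a chain map $\phi\colon C^{\flat}\to I$ I would build a contracting homotopy by descending induction on the degree, beginning at degree $0$ by choosing $s_{0}:=0$ — legitimate because $C^{\flat}$ is bounded above, so the homotopy identities in positive degrees hold automatically. Given $s_{n}$, a routine check using exactness of $C^{\flat}$ together with the chain-map relation and the homotopy identity already established at degree $n+1$ shows that $\phi_{n}-d^{I}_{n+1}s_{n}$ annihilates $\mathrm{Ker}d^{C^{\flat}}_{n}$, hence factors through the monomorphism $\mathrm{Im}d^{C^{\flat}}_{n}\hookrightarrow C^{\flat}_{n-1}$, and one extends along that monomorphism to a map $s_{n-1}\colon C^{\flat}_{n-1}\to I_{n}$, using that $I_{n}$ is injective; this yields the homotopy identity at degree $n$. (Note this step only needs $C^{\flat}$ exact and bounded above and $I$ degreewise injective.)

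For a chain map $\phi\colon C^{\sharp}\to I$ the induction instead runs upward from the bottom degree $1$, available because $C^{\sharp}$ is bounded below, and now the inductive step requires \emph{lifting} a map $C^{\sharp}_{n}\to\mathrm{Ker}d^{I}_{n}$ along the epimorphism $d^{I}_{n+1}\colon I_{n+1}\twoheadrightarrow\mathrm{Ker}d^{I}_{n}$; here injectivity of $C^{\sharp}_{n}$ is useless, and this lifting is the step I expect to be the genuine obstacle. It is cleared by Gorenstein homological algebra: since $R$ is left-Gorenstein, $I$ is a totally acyclic complex of injectives, so each cycle $\mathrm{Ker}d^{I}_{n+1}$ is a Gorenstein injective module; as $C^{\sharp}_{n}=Y_{n}$ is injective it has finite projective dimension, so $C^{\sharp}_{n}\in\mathcal{W}$, and because $(\mathcal{W},\mathcal{GI})$ is a complete cotorsion pair we get $\mathrm{Ext}^{1}_{R}(C^{\sharp}_{n},\mathrm{Ker}d^{I}_{n+1})=0$, so the required lift exists and the induction goes through. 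Assembling the two homotopies gives $C^{\sharp},C^{\flat}\in{}^{\perp}(ex\widetilde{\mathcal{I}})$, hence $C\in{}^{\perp}(ex\widetilde{\mathcal{I}})$, and therefore $\varepsilon$ is a weak equivalence in $\mathcal{M}_{sing}^{co}$, as claimed.
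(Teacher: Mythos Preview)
Your proof is correct and follows essentially the same route as the paper: identify $\varepsilon$ as the degree-zero inclusion, reduce to showing $C=\mathrm{Coker}(\varepsilon)\in{}^{\perp}(ex\widetilde{\mathcal{I}})$, split $C$ into the bounded-below complex of injectives $C^{\sharp}=Y_{\sqsupset}$ and the bounded-above exact complex $C^{\flat}\cong D$, and treat each piece separately using (respectively) total acyclicity of $I$ and exactness plus degreewise injectivity. The only cosmetic differences are that the paper records the decomposition as a (split) short exact sequence rather than a direct sum, and invokes \cite[Lemmas 2.4, 2.5]{CFH06} in place of your explicit inductive homotopy constructions.
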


\begin{proof}
For $Y$, $G(Y)=\Lambda\Theta(Y)= \cdots\rightarrow 0 \rightarrow \mathrm{Ker}d_0^Y\rightarrow 0\rightarrow\cdots$ is a stalk complex with $\mathrm{Ker}d_0^Y$ concentrated in degree zero. It is easy to see that $FG(Y) = G(Y)$. Then the map $\varepsilon: FG(Y)\rightarrow Y$ is given by a natural embedding $\varepsilon_0: \mathrm{Ker}d_0^Y\rightarrow Y_0$ and $\varepsilon_i =0$ for any $i\neq 0$.
Let $C= \mathrm{Coker}\varepsilon$. Then $C=\cdots\longrightarrow Y_2\stackrel{d_2^Y}\longrightarrow Y_1\stackrel{0}\longrightarrow \mathrm{Im}d_0^Y\stackrel{\iota}\longrightarrow Y_{-1}\stackrel{d_{-1}^Y}\longrightarrow Y_{-2}\longrightarrow\cdots$, where $\iota$ is an embedding. Let $Y_{\sqsupset}=\cdots\rightarrow Y_2\stackrel{d_2^Y}\rightarrow Y_1\rightarrow 0$ be a hard truncation, $D=0\rightarrow \mathrm{Im}d_0^Y\stackrel{\iota}\rightarrow Y_{-1}\stackrel{d_{-1}^Y}\rightarrow Y_{-2}\rightarrow\cdots$. Then there is an exact sequence of complexes $0\longrightarrow Y_{\sqsupset}\longrightarrow C\longrightarrow D\longrightarrow 0$.

Let $E$ be any $R$-complex in $ex\widetilde{\mathcal{I}}$. Since $R$ is left-Gorenstein, then $E$ is totally acyclic, and for any $Y_i$, $\mathrm{Hom}_{R}(Y_i, E)$ is an exact complex. By \cite[Lemma 2.4]{CFH06}, the complex $\mathrm{Hom}_{R}(Y_{\sqsupset}, E)$ is exact.
Note that $D$ is an exact sequence, and then $\mathrm{Hom}_{R}(D, E_i)$ is an exact complex for any $i\in \mathbb{Z}$. By \cite[Lemma 2.5]{CFH06}, the complex $\mathrm{Hom}_{R}(D, E)$ is exact. Moreover, it follows from the short exact sequence
$$0\longrightarrow \mathrm{Hom}_{R}(D, E)\longrightarrow \mathrm{Hom}_{R}(C, E)\longrightarrow \mathrm{Hom}_{R}(Y_{\sqsupset}D, E)\longrightarrow 0$$
that the complex $\mathrm{Hom}_{R}(C, E)$ is exact. This implies that every map from $C$ to any complex in $ex\widetilde{\mathcal{I}}$ is null homotopic. Then $C\in {^{\perp}}ex\widetilde{\mathcal{I}}$. Hence, $\varepsilon: FG(Y)\rightarrow Y$ is a trivial cofibration in $\mathcal{M}_{sing}^{co}$, and moreover, $\varepsilon$ is a weak equivalence.
\end{proof}

\begin{lemma}\label{lem 5}
Let $Y$ be an exact complex of injective $R$-modules. Then $F(q): FQG(Y)\rightarrow FG(Y)$ is a weak equivalence in $\mathcal{M}_{sing}^{co}$, where $q: QG(Y)\rightarrow G(Y)$ is a cofibrant replacement in the model category $(\mathrm{Ch}(R), \mathcal{M}_{sing}^{ctr})$.
\end{lemma}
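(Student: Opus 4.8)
The plan is to resolve $G(Y)$ inside $\mathcal{M}_{sing}^{ctr}$, push the resolution through $F=\Lambda\Omega$, and then recognise the kernel of $F(q)$ as a trivial object of $\mathcal{M}_{sing}^{co}$. Since every object of $\mathcal{M}_{sing}^{ctr}$ is fibrant, the cofibrant replacement $q$ is a trivial fibration, so it sits in a short exact sequence
$$0\longrightarrow N\longrightarrow QG(Y)\stackrel{q}\longrightarrow G(Y)\longrightarrow 0$$
with $QG(Y)\in ex\widetilde{\mathcal{P}}$ and $N\in(ex\widetilde{\mathcal{P}})^{\perp}$. As in the proof of Lemma~\ref{lem 4}, $G(Y)=\Lambda\Theta(Y)$ is the stalk complex concentrated in degree zero, so $(G(Y))_1=0$; hence $N_1=(QG(Y))_1$, the map $\Omega(q)$ is induced by the surjection $q_0$, and a direct computation gives $\mathrm{Ker}\,\Omega(q)=N_0/\mathrm{Im}\,d_1^{QG(Y)}=\Omega(N)$ (equivalently, the snake-lemma connecting morphism into $\Omega(N)$ vanishes because $(G(Y))_1=0$). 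Applying the exact functor $\Lambda$, and using $\Omega\Lambda=\mathrm{id}$, one obtains a short exact sequence of complexes
$$0\longrightarrow F(N)\longrightarrow FQG(Y)\stackrel{F(q)}\longrightarrow FG(Y)\longrightarrow 0 .$$
Since an epimorphism whose kernel is a trivial object is a weak equivalence in any abelian model category (see \cite[Theorem 2.2]{Hov02}), it is enough to prove that $F(N)=\Lambda\Omega(N)\in {^{\perp}}(ex\widetilde{\mathcal{I}})$, i.e.\ that $F(N)$ is trivial in $\mathcal{M}_{sing}^{co}$.

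For this I would repeat the argument in the proof of Lemma~\ref{lem 3}: by \cite[Lemma 4.2]{Gil08}, for each $E\in ex\widetilde{\mathcal{I}}$ (which is totally acyclic, as $R$ is left-Gorenstein) there is a natural isomorphism $\mathrm{Ext}^1_{\mathrm{Ch}(R)}(\Lambda\Omega(N),E)\cong\mathrm{Ext}^1_R(\Omega(N),\Theta(E))$. Since $\Theta(E)$ ranges over all Gorenstein injective modules and $(\mathcal{W},\mathcal{GI})$ is a cotorsion pair, it suffices to show that $\Omega(N)\in\mathcal{W}$, that is, $\Omega(N)$ has finite projective dimension.

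This last point is the crux, and I expect it to be the main obstacle. I would establish it via Becker's theorem that $\Omega$ is a Quillen equivalence between $\mathcal{M}_{sing}^{ctr}$ and the Gorenstein projective model structure $(\mathcal{GP},\mathcal{W},\mathrm{Mod}(R))$ on $\mathrm{Mod}(R)$ \cite[Proposition 3.1.4]{Bec14}. Put $M=\Theta(Y)=\mathrm{Ker}\,d_0^Y$; then $\Lambda M=G(Y)$, while $\Omega\Lambda M=M$ with the counit of $(\Omega,\Lambda)$ an identity, so the derived counit of this Quillen equivalence at the (fibrant) module $M$ is exactly the map $\Omega(q):\Omega(QG(Y))\to M$. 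Being the derived counit of a Quillen equivalence at a fibrant object, $\Omega(q)$ is a weak equivalence in the Gorenstein projective model structure (see \cite[Proposition 1.3.13]{Hov99}); and since it is an epimorphism while every module is fibrant, it is in fact a trivial fibration, whence $\Omega(N)=\mathrm{Ker}\,\Omega(q)\in\mathcal{W}$. Combining this with the preceding paragraph gives $F(N)\in{^{\perp}}(ex\widetilde{\mathcal{I}})$, so $F(q)$ is an epimorphism with trivial kernel in $\mathcal{M}_{sing}^{co}$, hence a weak equivalence there, which proves the lemma.
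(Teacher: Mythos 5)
Your proof is correct, and it takes a genuinely different route from the paper's. The paper constructs an explicit cofibrant replacement $P$ of $G(Y)$ by hand (completing a special $\mathcal{GP}$-precover $0\to W\to M\to\Theta(Y)\to 0$ to a totally acyclic complex of projectives), identifies $F(\ker q)=\Lambda W$ directly with $W$ of finite projective dimension, and then exhibits $F(q)$ as a trivial cofibration followed by a trivial fibration via a push-out square. You instead take an \emph{arbitrary} cofibrant replacement $q$, compute that $\ker F(q)=\Lambda\Omega(N)$ (a correct reduction exploiting $(G(Y))_1=0$), and then outsource the crucial fact $\Omega(N)\in\mathcal{W}$ to Becker's result \cite[Proposition 3.1.4]{Bec14} that $\Omega$ is a Quillen equivalence onto the Gorenstein projective model structure --- recognising $\Omega(q)$ as the derived counit of that Quillen equivalence at the fibrant object $M=\Theta(Y)$, which is therefore a weak equivalence and, being epic, a trivial fibration. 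Finally you conclude by the standard fact that in an abelian model category an epimorphism with trivial kernel is a weak equivalence (this is closer to \cite[Lemma 5.8]{Hov02} than to Theorem 2.2, a minor citation slip). Your argument is shorter and more conceptual, and has the advantage of not depending on the particular cofibrant replacement; the price is that it leans on Becker's Quillen equivalence as a black box, whereas the paper's proof is self-contained --- which matters given that the paper explicitly presents its theorem as extending, not merely reassembling, Becker's proposition. Both are valid proofs.
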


\begin{proof}
For $Y$, $G(Y) = FG(Y) = \cdots\rightarrow 0 \rightarrow \mathrm{Ker}d_0^Y\rightarrow 0\rightarrow\cdots$. By the completeness of the cotorsion pair $(\mathcal{GP}, \mathcal{W})$, there is an exact sequence of $R$-modules $0\rightarrow W\rightarrow M\rightarrow \mathrm{Ker}d_0^Y\rightarrow0$ with $M\in \mathcal{GP}$ and $W\in \mathcal{W}$. Consider the totally acyclic complex $P$ of $M$, we have a short exact sequence $0\rightarrow K\rightarrow P\stackrel{q}\rightarrow G(Y)\rightarrow 0$, see the following diagram
$$\xymatrix@C=20pt@R=10pt{
K=\cdots \ar[r] &P_{1}\ar[dd]_{=}\ar[r]^{} &K_{0}\ar[dd]\ar@{-->}[rd]^{\pi}\ar[rr]^{} & &P_{-1}\ar[r]\ar[dd]_{=}&P_{-2}\ar[r]\ar[dd]_{=}&\cdots \\
 & & & W\ar@{-->}[ur]\ar@{-->}[dd]^{}\\
P= \cdots \ar[r] &P_{1}\ar[dd]_{}\ar[r]^{} &P_{0}\ar[dd]\ar@{-->}[rd]^{}\ar[rr]^{} & &P_{-1}\ar[r]\ar[dd]_{}&P_{-2}\ar[r]\ar[dd]_{}&\cdots\\
 & & & M\ar@{-->}[ur]\ar@{-->}[dd]^{}\\
G(Y) = \cdots \ar[r] &0\ar[r]^{} &\mathrm{Ker}d_0^Y\ar@{==}[rd]\ar[rr]^{} & &0\ar[r]_{} &0\ar[r]&\cdots\\
 & & & \mathrm{Ker}d_0^Y \ar@{-->}[ur]
  }$$

Let $K_{0\supset}= \cdots\rightarrow P_2\rightarrow P_1\rightarrow \mathrm{Ker}\pi\rightarrow 0$ and $K_{\subset0}= 0\rightarrow W\rightarrow P_{-1}\rightarrow P_{-2}\rightarrow\cdots$. Then there is a short exact sequence of complexes $0\longrightarrow K_{0\supset}\longrightarrow K\longrightarrow K_{\subset0}\longrightarrow 0.$
Let $T$ be any complex in $ex\widetilde{\mathcal{P}}$. Note that $T$ is totally acyclic. Then it follows from
\cite[Lemma 2.5]{CFH06} that the complex $\mathrm{Hom}_{R}(T, K_{\subset0})$ is exact, and this implies that $K_{\subset0}\in (ex\widetilde{\mathcal{P}})^{\perp}$. Note that $K_{0\supset}$ is an exact complex. For any morphism
$f: T\rightarrow K_{0\supset}$, we consider the following diagram:
$$\xymatrix@C=40pt{
\cdots \ar[r] &T_{2}\ar[d]_{f_2}\ar[r]^{} &T_{1}\ar[d]_{f_1}\ar[r]^{}\ar@{-->}[ld]_{s_1} &T_{0}\ar[r]\ar[d]_{f_{0}}\ar@{-->}[ld]_{s_0}
&T_{-1}\ar[r]\ar[d]^{}\ar@{-->}[ld]_{s_{-1}} &\cdots \\
\cdots \ar[r] &P_{2}\ar[r] &P_{1}\ar[r]^{}  &\mathrm{Ker}\pi \ar[r]&0\ar[r]&\cdots
  }$$
Let $s_i=0$ for any $i< 0$. Since $d_1^K: P_1\rightarrow \mathrm{Ker}\pi$ is an epic and $T_0$ is a projective module, there is a map
$s_0: T_0\rightarrow P_1$ such that $f_0 = d_{1}^{K}s_0$. Since $d_{1}^{K}(f_{1} - s_{0}d_{1}^{T}) = d_{1}^{K}f_{1} - d_{1}^{K}s_{0}d_{1}^{T} = d_{1}^{K}f_{1} - f_{0}d_{1}^{T} = 0$, then $f_{1} - s_{0}d_{1}^{T}: T_1\rightarrow \mathrm{Ker}d_{1}^{K}$, and there exists a map $s_1: T_1\rightarrow P_2$ such that $f_{1} - s_{0}d_{1}^{T} = d_{2}^{K}s_{1}$. Analogous to comparison theorem, we inductively get homotopy maps $\{s_i\}$ such that $f$ is null homotopic. Then $K_{0\supset}\in (ex\widetilde{\mathcal{P}})^{\perp}$. Thus, we have $K\in (ex\widetilde{\mathcal{P}})^{\perp}$.
Note that for any object in the model category $(\mathrm{Ch}(R), \mathcal{M}_{sing}^{ctr})$, its cofibrant replacement is precisely a special $ex\widetilde{\mathcal{P}}$-precover. Then it follows from the short exact sequence $0\rightarrow K\rightarrow P\stackrel{q}\rightarrow G(Y)\rightarrow 0$ that $P$ is a cofibrant replacement of $G(Y)$, and we can set $QG(Y) = P$.

Note that $F(K)= \cdots \rightarrow 0\rightarrow W\rightarrow 0\rightarrow\cdots$. Since $W$ is a module of finite projective dimension, for any complex $E\in ex\widetilde{\mathcal{I}}$, $\mathrm{Hom}_{R}(W, E)$ is exact. This implies that $F(K)\in {^{\perp}}(ex\widetilde{\mathcal{I}})$.
For $F(K)$, there is an exact sequence $0\rightarrow F(K)\rightarrow I\rightarrow L\rightarrow 0$ with $I\in ex\widetilde{\mathcal{I}}$ and $L\in {^{\perp}}(ex\widetilde{\mathcal{I}})$. We consider the following push-out diagram:
$$\xymatrix@C=20pt@R=20pt{ & 0\ar[d] & 0\ar[d] \\
0 \ar[r]^{}  &F(K) \ar[d] \ar[r] & F(P) \ar@{-->}[d]_{i}
  \ar[r]^{F(q)} &FG(Y) \ar@{=}[d]  \ar[r] &0 \\
0 \ar[r] & I \ar@{-->}[r] \ar[d] & J \ar[r]^{p} \ar[d] & GF(Y) \ar[r] & 0\\
  & L \ar[d] \ar@{=}[r] & L\ar[d]\\
  & 0 & 0
  }$$
It is clear that $i$ is a trivial cofibration. By the left column, we have $I\in {^{\perp}}(ex\widetilde{\mathcal{I}})$. Then $I\in ex\widetilde{\mathcal{I}}\cap {^{\perp}}(ex\widetilde{\mathcal{I}})$, and $p$ is a trivial fibration. Hence $F(q) = pi$ is a weak equivalence in $\mathcal{M}_{sing}^{co}$.
\end{proof}

\subsection*{The proof of the theorem}
It follows from Proposition \ref{prop 1} that $(F, G): (\mathrm{Ch}(R), \mathcal{M}_{sing}^{ctr})\longrightarrow (\mathrm{Ch}(R), \mathcal{M}_{sing}^{co})$ is a Quillen adjunction.
By \cite[Corollary 1.3.16]{Hov99}, there is a useful criterion for checking the given Quillen adjunction is a Quillen equivalence. Specifically, we need to show that $F$ reflects weak equivalences between cofibrant objects in $\mathcal{M}_{sing}^{ctr}$ (i.e. complexes in $ex\widetilde{\mathcal{P}}$), see Lemma \ref{lem 3}; moreover, for every fibrant object $Y$ in $\mathcal{M}_{sing}^{co}$ (i.e. $Y\in ex\widetilde{\mathcal{I}}$), we need to show that the composition $FQG(Y)\stackrel{F(q)}\rightarrow FG(Y)\stackrel{\varepsilon}\rightarrow Y$ is a weak equivalence, where $\varepsilon$ is the counit of the adjunction $(F, G)$, and $q: QG(Y)\rightarrow G(Y)$ is a cofibrant replacement of $G(Y)$, see Lemma \ref{lem 4} and \ref{lem 5}.

\begin{ack*}
The author is supported by National Natural Science Foundation of China (11871125), Natural Science Foundation of Chongqing (cstc2018jcyjAX0541) and the Science and Technology Research Program of Chongqing Municipal Education Commission (No. KJQN201800509).
\end{ack*}

\bigskip

\end{document}